\DeclareMathAlphabet{\mathcalligra}{T1}{calligra}{m}{n}
\DeclareMathAlphabet{\mathscrmin}{T1}{mathscr}{m}{n}
\numberwithin{equation}{section}
\theoremstyle{plain}
        \newtheorem{theorem}[equation]{Theorem}
        \newtheorem{lemma}[equation]{Lemma}
        \newtheorem{proposition}[equation]{Proposition}
        \newtheorem{corollary}[equation]{Corollary}
	    \newtheorem{definition}[equation]{Definition}
        \newtheorem{notation}[equation]{Notation}
\theoremstyle{definition}
        \newtheorem{remark}[equation]{Remark}
        \newtheorem{example}[equation]{Example}
\newcommand{\alg}[1]
{(#1,\overline{#1})}
\newcommand{\cc}{\mathcal}
\newcommand{\ff}{\mathsf}
\newcommand{\A}{\mathcal{A}}
\newcommand{\B}{\mathcal{B}}
\newcommand{\K}{\mathcal{K}}
\newcommand{\Cat}{\mathcal{C}at}
\newcommand{\mr}[1]{\overset {#1} {\longrightarrow}}
\newcommand{\xr}[1]{\xrightarrow {#1}}
\newcommand{\Mr}[1]{\overset {#1} {\Longrightarrow}}
\newcommand{\mrpairviejo}[2]
   {
    \xymatrix@C=5ex@R=2.4ex
            {
             {} \ar@<1.6ex>[r]^{#1} 
	            \ar@<-1.1ex>[r]^{#2} 
	         & {}
            }
   }
\newcommand{\mrpair}[2]
   {
    \xymatrix@C=5ex@R=2.4ex
            {
             {} \ar@<1ex>[r]^{#1} 
	            \ar@<-1ex>[r]_{#2} 
	         & {}
            }
   }
 \newcommand{\mrpairc}[2]
   {
    \xymatrix@C=5ex@R=2.4ex
            {
             {} \ar@<1ex>[r]^{#1} 
	            \ar@<-1ex>[r]|{o}_{#2} 
	         & {}
            }
   }
 \newcommand{\mrpaircc}[2]
   {
    \xymatrix@C=5ex@R=2.4ex
            {
             {} \ar@<1ex>[r]|{o}^{#1} 
	            \ar@<-1ex>[r]|{o}_{#2} 
	         & {}
            }
   }
\newcommand{\mlpair}[2]
   {
    \xymatrix@C=5ex@R=2.4ex
            {
             {} 
              & {} \ar@<1.0ex>[l]_{#2} 
	          \ar@<-1.7ex>[l]_{#1}
            }
    }
\newcommand{\cellrd}[3] 
 {
  \xymatrix@C=7ex@R=2.4ex
         {
          {} \ar@<1.6ex>[r]^{#1} 
             \ar@{}@<-1.3ex>[r]^{\!\! {#2} \, \!\Downarrow}
             \ar@<-1.1ex>[r]_{#3} 
          & {}
         }
 }
 \newcommand{\modif}[3] 
 {
  \xymatrix@C=7ex@R=2.4ex
         {
          {} \ar@<1.6ex>@{=>}[r]^{#1} 
             \ar@{}@<-1.3ex>@{=>}[r]^{\!\! {#2} \, \!\downarrow}
             \ar@<-1.1ex>[r]_{#3} 
          & {}
         }
 }
 \newcommand{\scellrd}[3] 
 {
  \xymatrix@C=4.5ex@R=2.4ex
         {
          {} \ar@<1.6ex>[r]^{#1}
             \ar@{}@<-1.3ex>[r]^{\!\! {#2} \, \!\Downarrow}
             \ar@<-1.1ex>[r]_{#3}
          & {}
         }
}
\newcommand{\cellld}[3] 
 {
  \xymatrix@C=6ex@R=2.4ex
         {
            {} 
          & {} \ar@<1.0ex>[l]^{#3} 
          \ar@{}@<-1.7ex>[l]^{\!\! {#2} \, \!\Downarrow}
	                                 \ar@<-1.7ex>[l]_{#1}
         }
 }
\newcommand{\cellpairrd}[4] 
 {
  \xymatrix@C=10ex@R=2.4ex
         {
          {} \ar@<1.6ex>[r]^{#1} 
             \ar@{}@<-1.3ex>[r]^{\!\! {#2} \, \!\Downarrow 
                                 \;\; {#3} \, \!\Downarrow }
             \ar@<-1.1ex>[r]_{#4} 
          & {}
         }
 }
\newcommand{\cellpairrdcorto}[4] 
 {
  \xymatrix@C=6ex@R=2.4ex
         {
          {} \ar@<1.6ex>[r]^{#1} 
             \ar@{}@<-1.3ex>[r]^{\!\! {#2} \, \!\Downarrow 
                                 \;\; {#3} \, \!\Downarrow }
             \ar@<-1.1ex>[r]_{#4} 
          & {}
         }
 }
\newcommand{\cellpairrdc}[4] 
 {
  \xymatrix@C=10ex@R=2.4ex
         {
          {} \ar@<1.6ex>[r]^{#1} 
             \ar@{}@<-1.3ex>[r]^{\!\! {#2} \, \!\Downarrow 
                                 \;\; {#3} \, \!\Downarrow }
             \ar@<-1.1ex>[r]|{o}_{#4} 
          & {}
         }
 }
\newcommand{\cellpairrdcc}[4] 
 {
  \xymatrix@C=10ex@R=2.4ex
         {
          {} \ar@<1.6ex>[r]|{o}^{#1} 
             \ar@{}@<-1.3ex>[r]^{\!\! {#2} \, \!\Downarrow 
                                 \;\; {#3} \, \!\Downarrow }
             \ar@<-1.1ex>[r]|{o}_{#4} 
          & {}
         }
 }
      \newcommand{\ssLim}[2]
   {
    \underset{#1}{\underleftarrow{\ff{\sigma s Lim}}}
    \; {#2}
   }   
      \newcommand{\ssopLim}[2]
   {
    \underset{#1}{\underleftarrow{\ff{\sigma s op Lim}}}
    \; {#2}
   }
   \newcommand{\sosopLim}[2]
   {
    \underset{#1}{\underleftarrow{\ff{\sigma}  s \ff{opLim}}}
    \; {#2}
   }
\newcommand{\dcell}[1]  
          {
					 \ar@<8pt>@{-}[d]+<-4pt,8pt> 
           \ar@<-8pt>@{-}[d]+<4pt,8pt>
           \ar@{}[d]|{#1}
          }
\newcommand{\dcellb}[1]   
          {
           \ar@<10pt>@{-}[d]+<-5pt,8pt> 
           \ar@<-10pt>@{-}[d]+<5pt,8pt>
           \ar@{}[d]|{#1}
          }
\newcommand{\deq}        
         {
          \ar@{=}[d]
         }
\newcommand{\dreq}       
         {
          \ar@{=}[dr]
         }
\newcommand{\dleq}       
         {
          \ar@{=}[dl]
         }
\newcommand{\dccell}[1]    
          {
           \ar@{-}[ld] 
           \ar@{-}[rd] 
           \ar@{}[d]|{#1}  
          }
\newcommand{\dcellbb}[1]   
          {
           \ar@<20pt>@{-}[d]+<-10pt,12pt> 
           \ar@<-20pt>@{-}[d]+<10pt,12pt>
           \ar@{}[d]|{#1}
          } 
\newcommand{\dl}    
          {                        
           \ar@<-2pt>@{-}[d]+<4pt,8pt>
          }
\newcommand{\dr}    
          {                        
           \ar@<2pt>@{-}[d]+<-4pt,8pt> 
          }
\newcommand{\dc}[1]    
          {                        
           \ar@{}[d]|{#1}  
          }
\newcommand{\dcr}[1]    
          {                        
           \ar@{}[dr]|{#1}  
          }
\newcommand{\uccell}[1]      
          { 
           \ar@{-}[ur] 
           \ar@{}[u]|{#1} 
           \ar@{-}[ul] 
          }
\newcommand{\uccellb}[1]     
          { 
           \ar@<-1ex>@{-}[ur] 
           \ar@{}[u]|{#1} 
           \ar@<1ex>@{-}[ul] 
          }
\newcommand{\dcellop}[1]  
          {
					 \ar@<6pt>@{-}[d]+<6pt,8pt> 
           \ar@<-6pt>@{-}[d]+<-6pt,8pt>
           \ar@{}[d]|{#1}
          }
\newcommand{\dcellopb}[1]  
          {
					 \ar@<7pt>@{-}[d]+<7pt,8pt> 
           \ar@<-7pt>@{-}[d]+<-7pt,8pt>
           \ar@{}[d]|{#1}
          }
\newcommand{\did}{\ar@2{-}[d]}
\newcommand{\op}[1]
          {
           \ar@{-}[ld] 
           \ar@{-}[rd] 
           \ar@{}[d]|{#1}  
          }
\newcommand{\cl}[1]
          { 
           \ar@{-}[ur] 
           \ar@{}[u]|{#1} 
           \ar@{-}[ul] 
          }
\newenvironment{acknowledgements}
  {
   \begin{abstract}}
  {\end{abstract}
   }
\begin{document}

\title{
Lifting PIE limits with strict projections}

\author{Szyld M.}

\maketitle

\begin{abstract}
We give a unified direct proof of the lifting of PIE limits to the 2-category of algebras and (pseudo) morphisms, which specifies precisely which of the projections of the lifted limit are strict and detect strictness. 
In the literature, these limits were lifted one by one, so as to keep track of these projections in each case. 
We work in the more general context of weak algebra morphisms, so as to include lax morphisms as well. 
PIE limits are also all simultaneously lifted in this case, provided some specified arrows of the diagram are pseudo morphisms. 
Again, this unifies the previously known lifting of many particular PIE limits, which were also treated separately. 
\end{abstract}

\begin{acknowledgements}
I thank Eduardo J. Dubuc for several 
conversations on the subject of this paper.
\end{acknowledgements}

\section{Introduction} \label{sec:intro}

PIE limits, that is the 2-dimensional limits which can be constructed from products, inserters, and equifiers, have received significant attention, especially since the proof in \cite{K2dim} that they can be lifted to the 2-category of algebras over a monad and (pseudo) morphisms between them. 
In \cite[\S 2]{K2dim} the lifting of some PIE limits is shown as follows: first products, inserters, and equifiers are lifted separately, and then so are several other limits which can be constructed from them, also with independent proofs for each of these. The reason why this is done is that it is not only relevant to know that a limit can be lifted, but there is a further result: some of the projections of the lifted limit are strict, and detect strictness. So, to see which are these specified projections in each case, the authors are forced to either look carefully at the construction of each of these limits from products, inserters, and equifiers (as they do), or to give a separate proof for each limit to be lifted (as it is noted in \cite[Remark 2.8]{K2dim}).

\smallskip

{\em The main result of this article, Theorem \ref{teo:main}, gives as an immediate corollary the lifting of any PIE limit to the 2-category of algebras and (pseudo) morphisms, specifying which of the projections of the lifted limit are strict and detect strictness.}

\smallskip

As it turns out, this family of projections of a PIE limit was already considered in the characterization of PIE limits in terms of their weights given in \cite{PIE}: it is the family corresponding to the initial objects of the connected components of the 1-dimensional category of elements given by the weight. But the fact that these are precisely the strict and strictness-detecting projections of the lifted limit seems to be new, and this is really the key to the lifting result. 

It is now known (\cite[\S 6.4]{LSAdv}) that PIE limits are precisely the limits that can be lifted to all these 2-categories of algebras and pseudo morphisms.
Also, it is shown in op. cit. that PIE weights coincide with the coalgebras for a particular comonad, and that this fact can be used for their lifting. However, as the authors point out themselves, the results so obtained contain no information specifying the strict and strictness-detecting projections. In addition, when considering the case of lax morphisms, the result here is significantly stronger than the one in op. cit.

The proof we give of this theorem consists of an adaptation of the proof of \cite[Th. 5.1]{S.lifting}, and as such it deals with a {\em conical} expression of the PIE limit, this is the notion of (conical) sigma strict limit (denoted $\sigma$-s-limit) which goes back to \cite{Gray} but that we introduce in the present paper with a modern notation. 
$\sigma$-s-limits are 
like the $\sigma$-limits of \cite{DDS1}, but where identities are taken in place 
of isomorphisms in the the structural 2-cells of the $\sigma$-cones. 
In other words, $\sigma$-s-limits are similar to lax limits, but the corresponding notion of cone requires the structural 2-cells corresponding to a fixed family $\Sigma$ of arrows of the indexing 2-category $\A$ to be identities. We refer to $(\A,\Sigma)$ as the indexing pair of the \mbox{$\sigma$-s-limit}. 
It has been long known (\cite{S}) that (conical) $\sigma$-s-limits are just as expressive as weighted 2-limits, but the latter seem to have been preferred in the developments of \mbox{2-dimensional} category theory that came since, much in the spirit of $\cc{C}at$-based 
category theory. A point I want to make is that working directly with 
$\sigma$-s-limits brings a new approach to the theory of 2-limits, as 
it is extensively illustrated in \cite{DDS1} and \cite{S.lifting}.

\smallskip

In Section \ref{sec:PIElimits} we carry this out for the case of PIE limits, by defining what it means for an indexing pair to be PIE. 
Combining the results in \cite{PIE} and \cite{S} mentioned above the definition is quite clear: for an indexing pair to be PIE, the subcategory given by $\Sigma$ should be a disjoint union of categories with initial objects. 
It is relevant however that there are expressions of many PIE limits by indexing pairs that don't come from PIE weights. 
We give examples of PIE $\sigma$-s-limits, and a construction of an arbitrary PIE $\sigma$-s-limit from products, inserters, and equifiers, which is closer to the classical construction of limits from products and equalizers than the one that can be found in \cite{PIE}.
This allows in particular to conclude that the assignations between weights and indexing pairs given in \cite[Th. 14, 15]{S} restrict to PIE weights and PIE indexing pairs.

\smallskip

In Section \ref{sec:PIElimitsofalgebras} we lift PIE limits to the 2-categories of algebras and weak morphisms. These 2-categories were introduced in \cite{S.lifting} in order to deal with the usually considered notions of algebra morphisms simultaneously: we fix a family of 2-cells in which the structural 2-cell of the algebra morphism is required to be. 
Theorem \ref{teo:main} deals with the lifting of an arbitrary PIE limit to these 2-categories, and specifies which of the projections are strict and strict-detecting. 
As a first corollary, we get the result mentioned above that unifies several results in \cite[\S 2]{K2dim}. 
In fact, this yields also a slight strengthening of these results: in our case, the base category is not required to have any other limit than the one we lift. 
I should note that for this case in which every algebra morphism has an invertible structural 2-cell, though the proofs would have been more complicated, it would have sufficed to consider just weighted PIE limits; the only advantage of the approach by indexing pairs is the greater simplicity of the conical shape. However, for the more subtle case of lax morphisms, in which not all PIE limits can be lifted, there is also a mathematical gain that comes with $\sigma$-s-limits.

Consider a diagram 2-functor $\A \mr{} T$-$Alg_{\ell}$ whose limit we want to lift. 
As it is known since \cite{Llax}, usually some of the arrows of the diagram should be (pseudo) algebra morphisms for the limit to be lifted. This is also the case in Theorem \ref{teo:main} for lifting a PIE $\sigma$-$s$-limit, the unique arrow in $\Sigma$ going to each object from the initial object of its connected component is required to be a pseudo algebra morphism. 
 The strength of this hypothesis, however, depends greatly (for a fixed type of limit) on its {\em presentation} as a $\sigma$-s-limit, which at the same time makes it relevant to find different presentations of the same limit. In general, the presentation coming from the construction in \cite{S} gives hypothesis that are too strong when compared to the results in \cite{Llax} and \cite{S.lifting} (basically all the arrows of the diagram are required to be pseudo algebra morphisms in this case). But these results can be deduced from Theorem \ref{teo:main} precisely when we use the approach of working directly with $\sigma$-s-limits, that is when we consider the {\em intrinsic} presentations of these $\sigma$-s-limits that don't come from their expressions as weighted limits.

 \section{PIE $\sigma$-s-limits} \label{sec:PIElimits}

For a basic reference on the subject of limits in 2-category theory see \cite{KElem}. 
We begin by recalling, mainly for fixing the notation we will use in this article, the category of lax cones of a given 2-functor.

\begin{definition}
Let $\A \mr{F} \B$ be a 2-functor, and $E \in \B$. A lax cone (for $F$, with vertex $E$) is given by the following data (which amounts to a lax natural transformation from the constant 2-functor at $E$ to $F$): $\{E \mr{\theta_A} FA\}_{A \in \A}$,  $\{Ff \theta_A \Mr{\theta_f} \theta_B\}_{A\mr{f}B \in \A}$, which is required to satisfy the following equations:

\smallskip

\noindent
\begin{tabular}{rll}
 {\bf LC0}. & For all $A\in \A$, & ${\theta}_{id_A} = id_{\theta_A}$. \\
 {\bf LC1}. & For all $A \mr{f} B \mr{g} C \in \A$, & $\theta_{gf} = \theta_g \circ Fg \theta_f$. \\
 {\bf LC2}. & For all $A \cellrd{f}{\gamma}{g} B \in \A$, & $\theta_f = \theta_g \circ F\gamma \theta_A$. 
\end{tabular}

An op-lax cone is defined analogously but the structural $2$-cells $\theta_f$ are reversed. When it is safe, we omit the prefix ``op", and the evident dual statements.

A morphism of lax cones $\theta \mr{\alpha} \theta'$
(which amounts to a modification between the lax natural transformations) 
 is given by the data $\{\theta_A \Mr{\alpha_A} \theta'_A\}_{A \in \A}$ satisfying:

\smallskip

\noindent
\begin{tabular}{rll}
 {\bf LCM}. & For all $A \mr{f} B \in \A$, & $\quad \quad \; \theta'_f \circ Ff \alpha_A = \alpha_B \circ \theta_f$. 
\end{tabular}

\smallskip

In this way we have a category $Cones_{\ell}(E,F)$.
\end{definition}

The following easy Lemma 
(which could also be stated for general lax natural transformations and modifications and is probably well-known)
will be used in Section \ref{sec:PIElimitsofalgebras}. It allows to {\em modify} a lax cone by a given family of invertible 2-cells. For convenience we give the op-lax version.

\begin{lemma} \label{lema:nuevocono}
Given an op-lax cone $\{E \mr{\theta_A} FA\}_{A \in \A}$, $\{\theta_B \Mr{\theta_f} Ff \theta_A\}_{A\mr{f}B \in \A}$, a family of arrows $\{E \mr{\theta'_A} FA\}_{A \in \A}$, and a family of invertible 2-cells $\{\theta_A \Mr{\alpha_A} \theta'_A\}_{A \in \A}$, the definition $\theta'_f = Ff \alpha_A \circ \theta_f \circ \alpha_B^{-1}$ yields the only possible op-lax cone structure such that the $\alpha_A$ form a modification.
\end{lemma}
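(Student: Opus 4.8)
The plan is to check three things in turn: that the formula $\theta'_f = Ff\,\alpha_A \circ \theta_f \circ \alpha_B^{-1}$ defines a genuine op-lax cone structure on the family $\{\theta'_A\}$, that the $\alpha_A$ then satisfy the modification axiom, and that this is the only structure for which they do. Throughout I write $\circ$ for vertical composition and use juxtaposition for whiskering. Dualizing \textbf{LC0}, \textbf{LC1}, \textbf{LC2}, the op-lax cone $\theta$ has structural cells $\theta_f \colon \theta_B \Rightarrow Ff\,\theta_A$ subject to $\theta_{id_A} = id_{\theta_A}$, to $\theta_{gf} = Fg\,\theta_f \circ \theta_g$ for $A \mr{f} B \mr{g} C$, and to $\theta_g = F\gamma\,\theta_A \circ \theta_f$ for $\gamma \colon f \Rightarrow g$; the (op-lax) modification axiom \textbf{LCM} for $\alpha$ reads $\theta'_f \circ \alpha_B = Ff\,\alpha_A \circ \theta_f$.

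The axiom \textbf{LC0} is immediate: since $F(id_A) = id_{FA}$ the whiskering $F(id_A)\,\alpha_A$ equals $\alpha_A$, and \textbf{LC0} for $\theta$ gives $\theta_{id_A} = id_{\theta_A}$, so the formula collapses to $\alpha_A \circ \alpha_A^{-1} = id_{\theta'_A}$. The verification of \textbf{LC1} is the main piece of bookkeeping: substituting the formula into $Fg\,\theta'_f \circ \theta'_g$, I would use $Fg \circ Ff = F(gf)$ to rewrite $Fg(Ff\,\alpha_A) = F(gf)\,\alpha_A$, after which the factor $(Fg\,\alpha_B)^{-1}$ produced by $Fg\,\theta'_f$ cancels the factor $Fg\,\alpha_B$ coming from $\theta'_g$. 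What survives is $F(gf)\,\alpha_A \circ Fg\,\theta_f \circ \theta_g \circ \alpha_C^{-1}$, and \textbf{LC1} for $\theta$ rewrites the middle as $\theta_{gf}$, identifying the expression with $\theta'_{gf}$.

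For \textbf{LC2} the single non-formal ingredient is the interchange law. Expanding $F\gamma\,\theta'_A \circ \theta'_f$ I meet the horizontal composite $F\gamma\,\theta'_A \circ Ff\,\alpha_A$, which by middle-four interchange (computing the horizontal composite $F\gamma * \alpha_A$ in both ways) equals $Fg\,\alpha_A \circ F\gamma\,\theta_A$; substituting this and then applying \textbf{LC2} for $\theta$ turns the expression into $Fg\,\alpha_A \circ \theta_g \circ \alpha_B^{-1} = \theta'_g$, as required.

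Finally, the modification axiom and uniqueness fall out together. Since $\alpha_B^{-1} \circ \alpha_B = id$, the defining formula is equivalent to $\theta'_f \circ \alpha_B = Ff\,\alpha_A \circ \theta_f$, which is exactly \textbf{LCM}; and conversely, because each $\alpha_B$ is invertible, that same equation determines $\theta'_f$ uniquely, forcing the stated formula. Thus the genuine content is confined to the diagram-chases for \textbf{LC1} and \textbf{LC2}, the only conceptual step being the one application of interchange in \textbf{LC2}; everything else is forced by functoriality of $F$ and invertibility of the $\alpha_A$.
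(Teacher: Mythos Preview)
Your proof is correct and follows exactly the same approach as the paper's own proof, which merely states that the defining formula is ``clearly equivalent'' to \textbf{LCM} and that the verification of \textbf{LC0}--\textbf{LC2} is ``immediate''. You have simply spelled out those immediate verifications in full; nothing is missing and nothing differs in strategy.
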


\begin{proof}
The equation defining $\theta'_f$ is clearly equivalent to the one in axiom {\bf LCM}. The verification of the {\bf LC0}-{\bf LC2} axioms for $\theta'$ is immediate.
\end{proof}

\begin{notation} \label{not:ASigma}
We consider fixed throughout this section a family $\Sigma$ of arrows of the \mbox{$2$-category} $\A$, closed under composition and containing all the identities. We denote the arrows of $\Sigma$ with a circle: $\xymatrix{\cdot \ar[r]|{\circ} & \cdot}$
\end{notation}

We now describe explicitly the notions of $\sigma$-s-cone and (conical) $\sigma$-s-limit. These are notions originally considered by Gray in \cite{Gray}, but we introduce them here with a notation and approach closer to \cite{DDS1}, \cite{S.lifting}. Note that this corresponds to the case $\Omega = \Omega_s$ in \cite[Def. 2.6]{S.lifting}.

\begin{definition} \label{def:sslim}
Let $\A \mr{F} \B$ be a 2-functor, and $E \in \B$. A $\sigma$-s-cone (for $F$, with vertex $E$) is a lax cone which satisfies the additional equation (note that it implies equation {\bf LC0}):

\smallskip

\noindent
\begin{tabular}{rll}
 {\bf $\sigma$sC}. & For all $f \in \Sigma$, & ${\theta}_{f}$ is the identity 2-cell. \\
\end{tabular}

\smallskip

The category of $\sigma$-s-cones, $Cones_{\sigma}^s(E,F)$, is the full subcategory of $Cones_{\ell}(E,F)$. The (conical) $\sigma$-s-limit of $F$ (with respect to $\Sigma$) is 
universal $\sigma$-s-cone, denoted \mbox{$\{\ssLim{A\in \A}{FA} \mr{\pi_A} FA\}_{A\in \A}$,} \mbox{$\{Ff \pi_{A} \Mr{\pi_f} \pi_B \}_{A\mr{f} B \in \A}$}, in the sense that for each \mbox{$E\in \B$,} post-composition with $\pi$ is an isomorphism of categories
\begin{equation}\label{isoplim}
\; \B(E,\ssLim{A\in \A}{FA}) \mr{\pi_*} Cones_\sigma^s(E,F) 
\end{equation}
We refer to the arrows $\pi_A$, for $A \in \A$, as the projections of the limit.
\end{definition}

As it is usual, we say that the limit $\pi$ satisfies a one-dimensional universal property (every cone $\theta$ factors uniquely as $\pi_* \phi$) and a two-dimensional universal property (every morphism of cones $\theta \mr{\alpha} \theta'$ induces a unique 2-cell $\phi \Mr{\beta} \phi'$. The notion of $\Omega$-compatible limit (\cite[Def. 3.11, Rem. 3.12]{S.lifting}), which was key regarding their  lifting to the 2-categories of algebras, deals with a ``restriction" of this two-dimensional universal property to a family $\Omega$ of 2-cells of $\B$. For our case in Section \ref{sec:PIElimitsofalgebras} we will need a slight modification of this notion:

\begin{definition}
Let $\A \mr{F} \B$ a 2-functor, $\A_0$ a family of objects of $\A$, and $\Omega$ a family of 2-cells of $\B$. 
We say that the limit $\ssLim{A\in \A}{FA}$ is \mbox{$\A_0$-$\Omega$-compatible} if, in the correspondence between morphisms of cones $\theta \mr{\alpha} \theta'$ and 2-cells $\phi \Mr{\beta} \phi'$ given by the 2-dimensional universal property, if the 2-cells $\alpha_{A_0}$ are in $\Omega$ for each $A_0 \in \A_0$, then so is $\beta$.
\end{definition}

Note that, when $\A_0$ consists of all the objects of $\A$, we recover the notion of $\Omega$-compatible. When $\Omega$ consists of all the 2-cells of $\B$, or the invertible ones, or just the identities, then every limit is $\Omega$-compatible (\cite[Rem. 3.13]{S.lifting}). Trivial as this is, it is a fact implicitly used in \cite{K2dim}, \cite{Llax} when lifting limits, and a very particular case is also proved ``by hand" in \cite[Lemma 3.1]{Llax}.

We recall now the fact, mentioned in the introduction, that any 2-limit admits an expression as a $\sigma$-s-limit. 
$\cc{E}l_W$ stands for the 2-category of elements (Grothendieck construction) of $W$, and $\Diamond_W$ is the usual projection. 
For a proof see \cite[Th. 15]{S} or \cite[Prop. 3.18]{S.lifting}. 

\begin{proposition} \label{prop:weightedcomoconical}
Let $\A \mr{W} \Cat$, $\A \mr{F} \B$.
 The weighted 2-limit $\{W,F\}$ is equal to the \mbox{$\sigma$-s-limit} of the $2$-functor $\cc{E}l_W \mr{\Diamond_W} \A \mr{F} \Cat$ (with respect to the family $\Sigma$ of arrows of $\cc{E}l_W$ of the form $(f,id)$), in the sense that the universal properties defining each limit are equivalent.
\qed
\end{proposition}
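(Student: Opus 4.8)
The plan is to deduce the equivalence of the two universal properties from a single isomorphism of cone categories. Concretely, I would show that for each $E\in\B$ there is an isomorphism of categories
\[
[\A,\Cat]\big(W,\B(E,F(-))\big)\;\cong\;Cones_\sigma^s(E,F\Diamond_W),
\]
natural in $E$, between the category of weighted cones (2-natural transformations and modifications) and the category of $\sigma$-s-cones for the composite $F\Diamond_W$ with respect to $\Sigma$. Since $\{W,F\}$ represents the left-hand functor and $\ssLim{(A,x)\in\cc{E}l_W}{FA}$ the right-hand one, such a natural isomorphism forces the two representing objects, together with their universal cones, to coincide; this is exactly the asserted equality of the two universal properties.

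First I would make both sides explicit. A weighted cone is a 2-natural $\mu\colon W\Mr{}\B(E,F(-))$, i.e.\ a family of functors $\mu_A\colon WA\to\B(E,FA)$ with $\B(E,Ff)\,\mu_A=\mu_B\,Wf$ for $f\colon A\to B$, plus the corresponding equation for 2-cells of $\A$. On the other side, $\cc{E}l_W$ has objects $(A,x)$ with $x\in WA$, morphisms $(f,\xi)\colon(A,x)\to(B,y)$ with $\xi\colon (Wf)x\to y$ in $WB$, 2-cells given by 2-cells $\rho\colon f\Mr{}g$ of $\A$ compatible with the $\xi$'s, and $\Sigma$ consisting of the arrows $(f,\mathrm{id})$. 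The structural fact driving the proof is the factorization
\[
(f,\xi)=(\mathrm{id}_B,\xi)\circ(f,\mathrm{id}_{(Wf)x}),
\]
where the second factor lies in $\Sigma$ and the first is ``vertical'', lying over $\mathrm{id}_B$ and being the image of the fibre-morphism $\xi$ of $WB$. Thus the arrows of $\Sigma$ carry the $\A$-direction, while the vertical arrows reassemble each fibre category $WB$.

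The correspondence itself sets $\theta_{(A,x)}:=\mu_A(x)$ and $\theta_{(f,\xi)}:=\mu_B(\xi)$, the source being correctly typed because 2-naturality gives $Ff\,\mu_A(x)=\mu_B((Wf)x)$. Under this dictionary the axioms match cleanly: $\sigma$sC (the assertion that $\theta_{(f,\mathrm{id})}$ is an identity) is precisely the object-level 2-naturality equation just displayed; axiom {\bf LC2}, applied to 2-cells of $\cc{E}l_W$, encodes the 2-cell-level 2-naturality of $\mu$ together with the functoriality of each $\mu_B$ on its fibre; and {\bf LC1} together with the factorization above shows $\theta_{(f,\xi)}=\theta_{(\mathrm{id}_B,\xi)}=\mu_B(\xi)$, so the structural 2-cells are freely reconstructed from their vertical restrictions. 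Inversely, from a $\sigma$-s-cone I recover $\mu_A$ on objects by $\mu_A(x):=\theta_{(A,x)}$ and on a fibre-morphism $\xi$ by $\mu_A(\xi):=\theta_{(\mathrm{id}_A,\xi)}$, and these assignments are mutually inverse. On arrows, a modification of weighted cones is exactly a family $\{\alpha_{(A,x)}\colon\theta_{(A,x)}\Mr{}\theta'_{(A,x)}\}$ whose axioms coincide with {\bf LCM}, so the correspondence is functorial in both directions, hence an isomorphism; compatibility with whiskering by an arrow $E'\to E$ gives naturality in $E$. I expect the main obstacle to be purely organizational: matching the single axiom scheme {\bf LC0}--{\bf LC2} plus $\sigma$sC, indexed over all morphisms and 2-cells of $\cc{E}l_W$, against 2-naturality of $\mu$ (indexed over 1- and 2-cells of $\A$) combined with the fibrewise functoriality of the $\mu_A$. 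The factorization of $\cc{E}l_W$-morphisms into a $\Sigma$-part and a vertical part is what keeps this bookkeeping clean, as it exhibits a lax cone trivial on $\Sigma$ as being determined by, and freely assembled from, its restrictions to the fibres, in exact parallel with how a $\Cat$-natural transformation is determined by its component functors.
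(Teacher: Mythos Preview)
Your argument is correct: the isomorphism of cone categories
\[
[\A,\Cat]\big(W,\B(E,F(-))\big)\;\cong\;Cones_\sigma^s(E,F\Diamond_W),
\]
natural in $E$, is precisely what one must establish, and your verification of the axioms via the factorization $(f,\xi)=(\mathrm{id}_B,\xi)\circ(f,\mathrm{id})$ is the right way to organise the bookkeeping. The only point where one might ask for an extra line is the check of {\bf LC1} for a general composite $(g,\eta)(f,\xi)=(gf,\eta\circ(Wg)\xi)$, which uses that 2-naturality of $\mu$ at the arrow $g$ gives $Fg\,\mu_B(\xi)=\mu_C((Wg)\xi)$; you gesture at this but it is worth making explicit.

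As to comparison with the paper: note that the paper does \emph{not} supply its own proof of this proposition. It records the statement with a terminal \qed\ and, in the sentence immediately preceding it, refers the reader to \cite[Th.~15]{S} and \cite[Prop.~3.18]{S.lifting}. Your write-up is therefore not an alternative to an argument in the paper but an independent direct proof of the cited fact. The approach you take --- identifying weighted cones with $\sigma$-s-cones through the $\Sigma$/vertical factorization in $\cc{E}l_W$ --- is the standard one and is essentially what those references carry out.
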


\begin{remark} \label{rem:weightedcomoopconical}
If we want to express a 2-limit as a (conical) $\sigma$-s-op-limit, we can either do it with a dual proof to the one of the Proposition above, or by using general results relating limits and op-limits (see \cite[Rem 3.7]{S.lifting} and the references therein). In any case, the indexing pair for the $\sigma$-s-op-limit is easily seen to be $(\Gamma_W,\Sigma)$, where $\Gamma_W$ is a dual version of $\cc{E}l_W$ (see \cite[Rem. 2.5.2]{DDS1} for details).
\end{remark}

\begin{example}\label{ex:pielimits}
We give now various examples of $\sigma$-s-limits which can be constructed from products, inserters, and equifiers. They are considered in \cite[\S 2]{K2dim}, \cite{Llax} for their lifting to \mbox{2-categories} of algebra morphisms, and they are described explicitly as weighted 2-limits in \cite[\S 4]{KElem}, \cite{PIE}. For convenience regarding the application to these limits of the results in Section \ref{sec:PIElimitsofalgebras}, we give the $\sigma$-s-oplimit versions, that is, $\ssopLim{}{F}$. It is easy to adapt them for $\sigma$-s-limits.

\smallskip 

\noindent
{\bf 1. Product.} This is the case when the indexing 2-category $\A$ is a set.

\smallskip 

\noindent
{\bf 2. Inserter.} This goes back to \cite[I,7.10 2)]{Gray}: it is given by the diagram $\{ \xymatrix{ A  \ar@<1ex>[r]|{\circ}^{f} \ar@<-1ex>[r]_{g} & B} \} \mr{F} \B $. Details can be found in \cite[Examples 3.15, 3.22, 5.3]{S.lifting}.

\smallskip 

\noindent
{\bf 3. Equifier.} It is given by the diagram $\{ \xymatrix@C=3pc{ A \ar@{}[r]|{\alpha \Downarrow \; \beta \Downarrow } \ar@<1.5ex>[r]|{\circ}^{f} \ar@<-1.5ex>[r]_{g} & B} \} \mr{F} \B $. For details see \cite[Ex. 3.16]{S.lifting}.

\smallskip 

\noindent
{\bf 4. Inverter.} It seems that it has never been considered as a $\sigma$-s-limit. The following is a presentation obtained applying Proposition \ref{prop:weightedcomoconical} to the definition in \cite{KElem}, but a better presentation may also be found. It is convenient to consider a more general case and describe a $\sigma$-s-opcone for a diagram 
$\Big\{ \vcenter{\xymatrix@C=3pc{
A \ar[r]|{\circ}^{f} \ar@<.3ex>@{}[rd]^{\alpha \Downarrow} \ar[rd]|{\circ}_g & B \ar@<.7ex>[d]^{h} \\
& C \ar@<.7ex>[u]^k }} \Big\} \mr{F} \B$, where $kh = id_B$, $hk = id_C$ and $\alpha$ is a 2-cell $f \Mr{} kh$ (there is also the 2-cell $h\alpha$). From the $\sigma$-s-opcone axioms it follows that such a $\theta$ is given by $\theta_A$ and $\theta_h$, with $\theta_k = \theta_A F\alpha$ ({\bf LC2}), and $ Fk\theta_h \circ \theta_k = id$, $Fh \theta_k \circ \theta_h = id$ ({\bf LC1} and {\bf LC0}). The inverter is then obtained when $FB = FC$ and $Fh = Fk = id$.

\smallskip 

\noindent
{\bf 5. Cotensor.} Given a category $\A$ which we consider as a discrete 2-category, and $B \in \B$, consider $\A^{op} \mr{F} \B$ constant at $B$ and $\Sigma$ only the identities.

\smallskip 

\noindent
{\bf 6. Comma object.} This also goes back to \cite[I,7.10 1)]{Gray}: consider the diagram $\{ \xymatrix{ A  \ar[r]^{f} & B & C \ar[l]|{\circ}_{g} } \} \mr{F} \B$. 
\end{example}

In \cite[\S 2, \S 3]{PIE}, PIE limits are characterized in terms of its weights. 
Considering their definition of PIE weight together with Proposition \ref{prop:weightedcomoconical}, we are led to define:

\begin{definition}
We say that a pair $(\A,\Sigma)$ is a PIE indexing pair, or for short that it is PIE, if the 1-subcategory of $\A$ given by all the objects of $\A$ and the arrows of $\Sigma$ satisfies that each of its connected components has an initial object.
\end{definition}

Note that for the two indexing pairs $(\cc{E}l_W,\Sigma)$ and $(\Gamma_W,\Sigma)$ considered in Proposition \ref{prop:weightedcomoconical} and Remark \ref{rem:weightedcomoopconical}, the 1-subcategory considered in this Definition is the same category $Gr$-$ob(W)$ considered in $\cite{PIE}$. Thus, we have

\begin{proposition} \label{prop:xx}
If $W$ is a PIE weight in the sense of \cite{PIE}, then $(\cc{E}l_W,\Sigma)$  and $(\Gamma_W,\Sigma)$ are PIE indexing pairs. \qed
\end{proposition}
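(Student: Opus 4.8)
The plan is to treat this as a direct comparison of definitions, since the one piece of genuine content — the identification of the relevant 1-subcategory with $Gr$-$ob(W)$ — has already been recorded in the note immediately preceding the statement. First I would recall the characterization of PIE weights from \cite[\S 2, \S 3]{PIE}: the weight $W$ is PIE exactly when the 1-category $Gr$-$ob(W)$ is a disjoint union of categories each having an initial object, that is, when every connected component of $Gr$-$ob(W)$ admits an initial object.

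Next I would use the observation stated just above the proposition, that for each of the two pairs $(\cc{E}l_W,\Sigma)$ and $(\Gamma_W,\Sigma)$ the 1-subcategory singled out in the definition of a PIE indexing pair — the subcategory on all objects together with only the $\Sigma$-arrows, i.e.\ those of the form $(f,id)$ — is precisely $Gr$-$ob(W)$. With this in hand the conclusion is immediate: $W$ PIE says every connected component of $Gr$-$ob(W)$ has an initial object, and since $Gr$-$ob(W)$ is exactly the 1-subcategory appearing in the definition of PIE indexing pair for both $(\cc{E}l_W,\Sigma)$ and $(\Gamma_W,\Sigma)$, the defining condition of a PIE indexing pair holds for both pairs, so both are PIE.

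The only step requiring actual verification is the coincidence of these 1-subcategories with $Gr$-$ob(W)$, and this is where I would expect whatever small difficulty there is to reside: it amounts to unwinding the Grothendieck construction $\cc{E}l_W$ (and its dual $\Gamma_W$ from \cite[Rem. 2.5.2]{DDS1}) and checking that restricting the morphisms $(f,\phi)$ to those with $\phi=id$ retains exactly the arrows that build $Gr$-$ob(W)$ while discarding all the remaining 2-cell data. Because the excerpt already asserts this coincidence as a noted fact, and it is a plain reading of the definitions, the proposition collapses to the definitional matching above, which is why it can be closed off with \qed.
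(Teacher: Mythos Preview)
Your proposal is correct and matches the paper's approach exactly: the paper gives no separate proof beyond the sentence preceding the proposition, which records that the relevant 1-subcategory is $Gr$-$ob(W)$, and then closes with \qed. You have simply spelled out the definitional matching that this sentence summarizes.
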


An informal idea which is convenient to have in mind is that, given a (PIE) limit $\{W,F\}$, the (PIE) indexing pair $(\cc{E}l_W,\Sigma)$ has, by construction, the same {\em complexity} of $W$, in the sense that the cones for each of the limits in Proposition \ref{prop:weightedcomoconical} consist of the same information but only arranged differently.
There is a positive aspect of the $\sigma$-s-limit which is its conical shape, for which it is easier to have an intuition.
 But furthermore, it seems to be usually the case that the same limit admits a simpler expression as a $\sigma$-s-limit (see Examples 2, 3 and 6 above), different to the one given by Proposition \ref{prop:weightedcomoconical}. In particular, these simpler expressions will allow for stronger results when we apply the lifting theorem of Section \ref{sec:PIElimitsofalgebras} to them. 

\begin{notation}
When $(\A,\Sigma)$ is PIE, we will denote by $\A_0$ the family of the initial objects of each connected component of the 1-subcategory given by $\Sigma$. Given $A \in \A$, we will denote by $\xymatrix{ A_0 \ar[r]|{\circ}^{f_A} & A }$ the unique arrow in $\Sigma$ from the initial object of its connected component.
\end{notation}

\begin{remark}[{cf. \cite[Lemma 2.3]{PIE}}] \label{rem:A0compatible}
If $(\A,\Sigma)$ is PIE, $\A \mr{F} \B$ is a 2-functor, and $\theta$ is a \mbox{$\sigma$-s-cone} for $F$, then the family $\{ \theta_A \}_{A \in \A}$ is completely determined by the subfamily 
$\{ \theta_{A_0} \}_{A_0 \in \A_0}$ (since by {\bf $\sigma$sC} we have $\theta_A = F(f_A) \theta_{A_0}$). Also, a morphism of lax cones $\theta \mr{\alpha} \theta'$ is determined by its components $\alpha_{A_0}$ with $A_0 \in \A_0$ (by {\bf LCM} we have $\alpha_A = F(f_A) \alpha_{A_0}$). It follows that, for any family $\Omega$ of 2-cells of $\B$, the limit $\ssLim{A \in \A}{FA}$ is $\A_0$-$\Omega$-compatible if and only if it is $\Omega$-compatible.
\end{remark}

The previous remark, though immediate, is really significant regarding the construction of PIE limits from products, inserters, and equifiers (it allows to avoid using equalizers, similarly to \cite[Lemma 2.3]{PIE}) and also for their lifting to the 2-categories of algebras. Note that for a PIE $\sigma$-s-limit there is a distinguished family of projections from which the other ones are uniquely constructed; these projections will be precisely the ones that are strict and detect strictness when lifted. It is instructive to consider this Remark in the Examples 1 to 6 above and recover the usual projections of these limits. We also have the following immediate Corollary:

\begin{corollary} \label{coro:projjm}
If $(\A,\Sigma)$ is PIE and $\A \mr{F} \B$ is a 2-functor admitting a $\sigma$-s-limit as in Definition \ref{def:sslim}, then the family $\{ \pi_{A_0} \}_{A_0 \in \A_0}$ of projections is jointly monic. Also, if a pair of 2-cells is equal after
composing with all the $\pi_{A_0}$ (for $A_0 \in \A_0$), then they are equal. \qed
\end{corollary}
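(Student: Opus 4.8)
The plan is to read both assertions off the universal property \eqref{isoplim}, once it is combined with Remark \ref{rem:A0compatible}. Write $L := \ssLim{A\in \A}{FA}$ and fix $E \in \B$. The identity that does all the work is obtained by applying {\bf $\sigma$sC} to the universal cone $\pi$ along the arrow $f_A$: it gives $\pi_A = F(f_A)\,\pi_{A_0}$ as $1$-cells $L \to FA$ (this is exactly Remark \ref{rem:A0compatible} for the cone $\theta = \pi$). Whiskering this equality with a $2$-cell $\phi\colon g \Rightarrow h$ between maps $g,h\colon E \to L$, and using functoriality of whiskering, yields $\pi_A \phi = F(f_A)(\pi_{A_0}\phi)$; post-composing it with a $1$-cell $g$ gives the analogue $\pi_A g = F(f_A)\,\pi_{A_0}g$.

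First I would treat the statement about $2$-cells. Suppose $\phi,\psi\colon g \Rightarrow h$ satisfy $\pi_{A_0}\phi = \pi_{A_0}\psi$ for every $A_0 \in \A_0$. By the whiskering identity above this equality propagates to every object of $\A$, so $\pi_A \phi = \pi_A \psi$ for all $A$. But $\pi_A\phi$ and $\pi_A\psi$ are precisely the components of the morphisms of cones $\pi_*\phi$ and $\pi_*\psi$, so these coincide; alternatively one invokes the second half of Remark \ref{rem:A0compatible} directly, since the two cone morphisms already agree on their $A_0$-components. As $\pi_*$ is an isomorphism of categories by \eqref{isoplim}, it is in particular faithful, and therefore $\phi = \psi$.

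The joint monicity of $\{\pi_{A_0}\}$ follows from \eqref{isoplim} and Remark \ref{rem:A0compatible} by the same reduction carried out one dimension down, now using the $1$-cell identity $\pi_A g = F(f_A)\,\pi_{A_0}g$ in place of its whiskered form. In both cases the only delicate point — and what I take to be the real content of the corollary — is the passage from the data indexed by $\A_0$ to the data indexed by all of $\A$. This is where the PIE hypothesis is used, and nowhere else: it enters solely through the identity $\pi_{f_A} = \mathrm{id}$ of {\bf $\sigma$sC} that makes Remark \ref{rem:A0compatible} hold. Once that reduction is granted, both conclusions are purely formal consequences of $\pi_*$ being an isomorphism of categories.
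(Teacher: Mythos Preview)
Your argument for the 2-cell assertion is correct and is precisely the content the paper has in mind: propagation from $\A_0$ to all of $\A$ via Remark \ref{rem:A0compatible}, followed by faithfulness of the isomorphism $\pi_*$.

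The 1-cell assertion, however, does not follow ``by the same reduction carried out one dimension down''. From $\pi_{A_0}g = \pi_{A_0}h$ you correctly obtain $\pi_A g = \pi_A h$ for every $A$, but this says only that the \emph{1-cell components} of the cones $\pi_*g$ and $\pi_*h$ agree. A $\sigma$-s-cone also carries structural 2-cells, and injectivity of $\pi_*$ on objects requires $\pi_f g = \pi_f h$ as well; Remark \ref{rem:A0compatible} says nothing about these. Whiskering a fixed 2-cell $\pi_f$ by distinct 1-cells $g,h$ need not give the same result merely because the composites $\pi_A g$ and $\pi_A h$ happen to coincide. Concretely, take $\A = \{A \mr{f} B\}$ with $\Sigma$ consisting only of the identities: this pair is PIE with $\A_0 = \{A,B\}$, the $\sigma$-s-limit is the lax limit of $Ff$, and in $\Cat$ this is the category of triples $(a,b,\phi\colon Ff(a)\to b)$, for which the two projections $\pi_A,\pi_B$ are visibly not jointly monic (two triples with the same $a,b$ but different $\phi$ are distinguished only by $\pi_f$). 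So the step fails, and in fact the 1-cell clause appears to be stated too strongly in the paper itself; what is actually needed in the proof of Theorem \ref{teo:main} can be salvaged by comparing the full cones $\pi_*(-)$, i.e.\ by also verifying the $\pi_f$-components directly.
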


We will now give the results for PIE $\sigma$-s-limits analogous to \cite[Prop. 2.1, Th. 2.2]{PIE}, that is an explicit construction of PIE-indexed limits from products, inserters, and equifiers. Though we could deduce these results from the ones of op. cit. using \cite[Th. 14]{S}, we consider a direct proof to be much clearer. 

\begin{proposition} \label{prop:yy}
If $(\A,\Sigma)$ is PIE, and $\A \mr{F} \B$ is a 2-functor, then 
$\ssLim{A \in \A}{FA}$
can be constructed from products, inserters, and equifiers.
\end{proposition}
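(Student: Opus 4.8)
The plan is to imitate the classical construction of a limit as an equalizer of two maps between products, replacing the equalizer by an inserter followed by an equifier, and using the PIE hypothesis to take the product over the family $\A_0$ rather than over all of $\A$. This last point is what allows us to avoid equalizers, which are not among the available PIE limits: by Remark \ref{rem:A0compatible} a $\sigma$-s-cone is determined by the subfamily $\{\theta_{A_0}\}_{A_0 \in \A_0}$, and the equation $\theta_A = F(f_A)\theta_{A_0}$ then holds by construction instead of being an equation between $1$-cells that one would otherwise have to impose.

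First I would form the product $P = \prod_{A_0 \in \A_0} FA_0$, with projections $p_{A_0}$, and set $\theta_A := F(f_A)\,p_{A_0}$ for each $A \in \A$, so that $\theta_{A_0} = p_{A_0}$ since $f_{A_0} = id$. A map $E \to P$ is then exactly a choice of family $\{\theta_{A_0}\}_{A_0 \in \A_0}$. I would check that for an arrow $f \colon A \to B$ in $\Sigma$ this already forces $F(f)\theta_A = \theta_B$ as $1$-cells: the composite $f f_A$ and $f_B$ are both $\Sigma$-arrows out of the initial object $A_0 = B_0$ of the common connected component, hence equal, so $\theta_B = F(f_B)p_{A_0} = F(f)\theta_A$. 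Thus axioms {\bf $\sigma$sC} and {\bf LC0} hold automatically with $\theta_f = id$, and no equalizer is needed.

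Next I would insert the remaining structural $2$-cells all at once. Writing $Q = \prod_{f \notin \Sigma} F(\mathrm{cod}\,f)$, I consider the two $1$-cells $u,v \colon P \to Q$ with components $u_f = F(f)\theta_A$ and $v_f = \theta_B$ for $f \colon A \to B$, and let $I \mr{i} P$ be their inserter, equipped with the universal $2$-cell whose $f$-component is the desired $\theta_f \colon F(f)\theta_A \Mr{} \theta_B$. By the universal property of the inserter, a map $E \to I$ is precisely the data of a $\sigma$-s-cone prior to imposing {\bf LC1} and {\bf LC2}. Finally I would impose these axioms with an equifier: each of their instances is an equation between a parallel pair of $2$-cells out of $I$, and packaging all such pairs into a single pair $\sigma,\tau$ landing in a suitable product of the codomains $FC$, I take $L \to I$ to be the equifier of $\sigma$ and $\tau$. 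This $L$ is the candidate for $\ssLim{A \in \A}{FA}$.

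It then remains to verify that $L$ carries the universal $\sigma$-s-cone, that is, that post-composition induces an isomorphism of categories $\B(E,L) \cong Cones_\sigma^s(E,F)$ for every $E \in \B$. On objects this is just the composite of the three classifications above. For the two-dimensional part the key observation is that the inserter condition singling out which $2$-cells of $\B(E,P)$ lift along $i$ is exactly axiom {\bf LCM} read componentwise, namely $\theta'_f \circ F(f)\alpha_A = \alpha_B \circ \theta_f$, whereas the equifier inclusion is fully faithful on hom-categories and so imposes no further condition on $2$-cells. I expect the main obstacle to be precisely this bookkeeping: checking that the data and equations successively classified by the product, the inserter, and the equifier assemble to exactly the axioms {\bf LC1}, {\bf LC2}, {\bf LCM} and {\bf $\sigma$sC} of Definition \ref{def:sslim}. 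The heart of the matter is the reduction to $\A_0$ of Remark \ref{rem:A0compatible}, which is what makes every remaining condition expressible as an equation between $2$-cells, handled by an equifier, rather than between $1$-cells, which would require an equalizer; the possibly infinite number of arrows, composable pairs and $2$-cells of $\A$ is harmless, since products of arbitrary arity are allowed.
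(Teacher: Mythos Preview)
Your proposal is correct and follows essentially the same approach as the paper: form the product over $\A_0$, then an inserter to introduce the structural $2$-cells $\theta_f$, then an equifier to impose the cone axioms, with the two-dimensional universal property handled via the inserter's classification of $2$-cells as axiom {\bf LCM}. The only minor difference is that the paper takes the inserter over \emph{all} arrows $f$ of $\A$ and then imposes {\bf $\sigma$sC} (i.e.\ $\theta_f = id$ for $f \in \Sigma$) as one of the equifier equations, whereas you insert only over $f \notin \Sigma$ and observe that {\bf $\sigma$sC} is automatic; both are valid and your variant is arguably cleaner, at the small cost of having to note that {\bf LCM} for $f \in \Sigma$ is likewise automatic.
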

 
\begin{proof}
The proof has similar ideas to the ones of \cite[Lemmas 2.3, 2.5, 2.6]{PIE}, but the construction in this conical case is closer to the classical construction of limits from products and egalizers, and allows for a much simpler notation.

We consider first the inserter $I$ of the diagram 
$\displaystyle \prod_{A_0 \in \A_0}{FA_0} \xymatrix{ \ar@<1ex>[r]^{\phi_0} \ar@<-1ex>[r]_{\phi_1} & 
} \prod_{A \mr{f} B} FB $ in which $\phi_0$ and $\phi_1$ are induced respectively by the arrows $F(f f_A) \pi_{A_0}$ and $F(f_B) \pi_{B_0}$. Note that a cone of this diagram consists of a family $E \mr{\theta_{A_0}} FA_0$ together with (if we define $\theta_A$ as the composition $F(f_A) \theta_{A_0}$) a family of 2-cells $F(f) \theta_A \Mr{\theta_f} \theta_B$. It is also easy to check that a morphism of two such cones $\theta, \theta'$ is given by a family $\{ \theta_{A_0} \Mr{\alpha_{A_0}} \theta'_{A_0} \}_{A_0 \in \A_0}$ such that if we define $\alpha_A = F(f_A) \alpha_{A_0}$ it satisfies axiom {\bf LCM}. 
We denote by $\theta$ the inserter cone.

We will thus obtain the desired limit as the equifier of a diagram which expresses the equations of a $\sigma$-s-cone, that is the diagram 
$$\displaystyle I \xymatrix@C=3pc{ \ar@<1.5ex>[r]^{\psi_0} \ar@<-1.5ex>[r]_{\psi_1} \ar@{}[r]|{\eta_0 \Downarrow \; \eta_1 \Downarrow} & } 
\prod_{A \xymatrix@C=1.5pc{\ar[r]|{\circ}^f & } B} FB
\; \times
\prod_{A \mr{f} B \mr{g} C} FC
\; \times
\prod_{A \cellrd{f}{\gamma}{g} B} FB
$$

in which the arrows $\psi_i$ and the 2-cells $\eta_i$ ($i=0,1$) are given by the equalities in the axioms {\bf $\sigma$sC}, {\bf LC1} and {\bf LC2}:

\smallskip

\noindent- For each $A \xymatrix@C=1.5pc{\ar[r]|{\circ}^f & } B$ we consider 
$I \xymatrix@C=3pc{ \ar@<1.5ex>[r]^{\theta_B} \ar@<-1.5ex>[r]_{\theta_B} \ar@{}[r]|{id \Downarrow \; \theta_f \Downarrow} & } FB$ (note that by the PIE hypothesis we have $A_0 = B_0$ and $f f_A = f_B$, so $\theta_B = F(f) \theta_A$),

\smallskip

\noindent- For each $A \mr{f} B \mr{g} C$ we consider 
$I \xymatrix@C=5pc{ \ar@<1.5ex>[r]^{F(gf)\theta_A} \ar@<-1.5ex>[r]_{\theta_C} \ar@{}[r]|{\theta_{gf} \Downarrow \; \theta_g \circ Fg \theta_f \Downarrow} & } FC$, and 

\smallskip

\noindent- For each $A \cellrd{f}{\gamma}{g} B \in \A$ we consider
$I \xymatrix@C=5pc{ \ar@<1.5ex>[r]^{F(f) \theta_A} \ar@<-1.5ex>[r]_{\theta_B} \ar@{}[r]|{\theta_{f} \Downarrow \; \theta_g \circ F\gamma \theta_A \Downarrow} & } FC$.
\end{proof} 

It is clear that a dual proof of this result holds for $\sigma$-s-op-limits. 
Exactly as in \cite{PIE}, we have as a corollary:

\begin{theorem}
A 2-category has all $\sigma$-s-limits (resp.  $\sigma$-s-op-limits) with a PIE indexing pair if and only if it has all
products, inserters, and equifiers. A 2-functor between 2-categories which have these limits preserves all PIE-indexed $\sigma$-s-limits (resp.  $\sigma$-s-op-limits) if and only if it preserves products,
inserters, and equifiers. \qed
\end{theorem}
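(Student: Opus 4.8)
The plan is to deduce both assertions from Proposition~\ref{prop:yy} (and its op-dual, noted immediately above) together with the single observation that products, inserters, and equifiers are \emph{themselves} PIE $\sigma$-s-limits. That observation is immediate from Example~\ref{ex:pielimits}: for a product $\Sigma$ consists only of identities, so each connected component of the associated 1-subcategory is a single object and is trivially initial; for an inserter or an equifier one has $\Sigma=\{id_A,id_B,f\}$, the 1-subcategory is connected, and $A$ is its initial object. Hence all three are PIE. Granting this, the first sentence of the theorem follows at once: if a 2-category has all PIE-indexed $\sigma$-s-limits then in particular it has these three; conversely, if it has products, inserters, and equifiers then Proposition~\ref{prop:yy} constructs every PIE $\sigma$-s-limit from them, so all such limits exist. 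The $\sigma$-s-op-limit case is identical, using the op-dual of Proposition~\ref{prop:yy}.

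For the second sentence, the ``only if'' direction is again immediate, since products, inserters, and equifiers are PIE limits and so are preserved by any 2-functor preserving all PIE-indexed $\sigma$-s-limits. The content is the converse. Let $G \colon \B \to \C$ preserve products, inserters, and equifiers, and let $\A \mr{F} \B$ with $(\A,\Sigma)$ PIE. I would show that $G$ carries the construction of Proposition~\ref{prop:yy} for $F$ to the same construction for $GF$. Since $G$ preserves the products $\prod_{A_0}FA_0$ and $\prod_{A\to B}FB$, it sends their projections to the corresponding projections for $GF$, and hence sends the induced comparison maps $\phi_0,\phi_1$ to the maps built in the same way from $GF$. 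Preservation of inserters then identifies $GI$ with the inserter of the diagram for $GF$, carrying the inserter cone $\theta$ to the corresponding cone; and preservation of equifiers identifies $G$ applied to the final equifier with the equifier of the diagram for $GF$. By Proposition~\ref{prop:yy} applied to $GF$, this last equifier \emph{is} the PIE limit of $GF$, so $G$ preserves it.

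The main obstacle is exactly the bookkeeping underlying that last paragraph: one must check that the defining data of the construction commute with $G$, i.e. that $G\phi_i$, $G\psi_i$, and $G\eta_i$ coincide with the maps and 2-cells $\phi_i$, $\psi_i$, $\eta_i$ produced by the construction for $GF$. Each such equality reduces, via the universal property of the relevant product, to the two facts that $G$ sends the product projections $\pi_{A_0}$ to the projections for $GF$ and that $G$ commutes with whiskering by $F$-images of arrows of $\A$; both hold because $G$ is a 2-functor preserving the intermediate products. This verification is routine but is the only step requiring genuine care, and the op-limit case follows by duality.
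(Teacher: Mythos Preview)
Your proposal is correct and follows essentially the same approach the paper intends: the paper gives no explicit proof here, simply noting ``Exactly as in \cite{PIE}, we have as a corollary'' and invoking Proposition~\ref{prop:yy}, which is precisely the construction you unwind. Your additional bookkeeping for the preservation statement (tracking that $G$ carries the data $\phi_i,\psi_i,\eta_i$ to those for $GF$) is the standard argument implicit in \cite{PIE} and is carried out correctly.
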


\noindent Note that from Proposition \ref{prop:yy} and \cite[Coro. 3.3]{PIE}, it follows:

\begin{proposition} \label{prop:zz}
Let $(\A,\Sigma$) be PIE. If $\A \mr{W} \Cat$ satisfies that $W$-weighted limits are the same as $\sigma$-s-limits (resp. $\sigma$-s-oplimits), then $W$ is a PIE weight.
\end{proposition}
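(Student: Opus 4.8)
The plan is to obtain the statement as a formal corollary, chaining Proposition \ref{prop:yy} with the characterization of PIE weights in \cite[Coro. 3.3]{PIE}, which identifies the PIE weights with those whose weighted limits are constructible from products, inserters, and equifiers. The entire content is a transfer of this constructibility from the conical presentation to the weighted one, made possible by the coincidence of universal properties posited in the hypothesis; no new computation is involved, which is precisely why the result is recorded here as a corollary.

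First I would fix an arbitrary 2-functor $\A \mr{F} \B$ with $\B$ having products, inserters, and equifiers. Since $(\A,\Sigma)$ is PIE, Proposition \ref{prop:yy} builds $\ssLim{A\in\A}{FA}$ out of these three limits. By the standing hypothesis that $W$-weighted limits coincide with $\sigma$-s-limits, the object so produced also enjoys the universal property of $\{W,F\}$, so the very construction of Proposition \ref{prop:yy} exhibits $\{W,F\}$ as constructed from products, inserters, and equifiers. As this holds for every such $F$ and $\B$, and since the construction is preserved by any 2-functor preserving products, inserters, and equifiers (by the Theorem immediately following Proposition \ref{prop:yy}), we have both the existence and the preservation clauses that constitute constructibility in the technical sense. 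The weight $W$ then satisfies the hypothesis of \cite[Coro. 3.3]{PIE}, which yields that $W$ is a PIE weight. For the op-limit case I would argue identically, replacing Proposition \ref{prop:yy} by its dual (noted just after its proof), which constructs $\ssopLim{A\in\A}{FA}$ from the same three classes — products being self-dual, and inserters and equifiers accommodating structural 2-cells in either direction — so that the hypothesis again identifies $\{W,F\}$ with a limit built from products, inserters, and equifiers and \cite[Coro. 3.3]{PIE} applies verbatim.

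The one point demanding care, and the place where a loosely stated argument could fail, is to make sure the coincidence of the two universal properties is genuinely natural in $F$ and in $\B$, so that a single construction serves at once as a construction of the $\sigma$-s-limit and of $\{W,F\}$, and so that the preservation clause required by \cite[Coro. 3.3]{PIE} is actually available rather than merely the pointwise existence of $\{W,F\}$. Once this bookkeeping is pinned down, the statement follows immediately from Proposition \ref{prop:yy} and the cited characterization, with no further estimate or calculation required.
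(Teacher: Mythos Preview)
Your proposal is correct and follows essentially the same route as the paper, which records the result as an immediate consequence of Proposition \ref{prop:yy} together with \cite[Coro.~3.3]{PIE}. Your additional care about naturality and the preservation clause simply makes explicit what the paper leaves implicit in the phrase ``it follows''.
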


By Propositions \ref{prop:xx} and \ref{prop:zz}, we have that the assignations between weights and indexing pairs given in \cite[Th. 14, 15]{S} restrict to PIE weights and PIE indexing pairs. This justifies the abuse we do in the following Section, where we intend ``PIE limits" to mean indistinctly PIE-weighted limits or $\sigma$-s-(op-)limits with a PIE indexing pair.

\section{PIE limits in the $2$-categories of weak algebra morphisms} \label{sec:PIElimitsofalgebras}
 
We fix an arbitrary family $\Omega$ of $2$-cells of a $2$-category $\cc{K}$, closed under horizontal and vertical composition, and containing all the  identity $2$-cells. 
We briefly recall from \cite[\S 2]{S.lifting} how to define the 2-categories of weak algebra morphisms with respect to $\Omega$.
By considering the families $\Omega_\gamma$, $\gamma=s,p,\ell$ consisting of the identities, the invertible $2$-cells and all the $2$-cells of $\cc{K}$, we recover the $2$-categories of algebra morphisms usually considered.
  
We consider a $2$-monad $T$ on $\cc{K}$, and strict algebras of $T$.
 A weak morphism, or \mbox{$\omega$-morphism} (with respect to $\Omega$), $(A,a) \xr{(f,\overline{f})} (B,b)$ is given by $A \mr{f} B$ and $\vcenter{\xymatrix@C=1.5pc@R=1.5pc{TA \ar[r]^{Tf} \ar[d]_a \ar@{}[dr]|{\Downarrow \overline{f}} & TB \ar[d]^b \\
				  A \ar[r]_{f} & B}}$
				  subject to the usual coherence conditions. 
				  The 2-cells considered are the usual algebra 2-cells, and in this way we have a $2$-category $T$-$Alg_{\omega}^{\Omega}$ of \mbox{$T$-algebras} and $\omega$-morphisms, and a forgetful $2$-functor $T$-$Alg_{\omega}^{\Omega} \xr{U_{\omega}^{\Omega}} \cc{K}$. 	

\begin{notation}
 For a $2$-functor $\A \mr{\overline{F}} T$-$Alg_{\omega}^{\Omega}$, 
 we denote $F = U_{\omega}^{\Omega} \overline{F}$ and, for each arrow $f$ of $\A$, we denote  
 $\overline{F}(f) = (F(f), \overline{F(f)})$.
\end{notation}

\begin{definition} \label{de:preserve}
 Let $\Omega'$ be another family of $2$-cells of $\cc{K}$. We say that a family of morphisms $L \mr{p_i} A_i$ in $T$-$Alg_{\omega}^{\Omega}$ (jointly) detects $\Omega'$-ness if, for any other morphism $Z \mr{z} L$ in $T$-$Alg_{\omega}^{\Omega}$, if all the compositions $p_i z$ are $\omega$-morphisms with respect to $\Omega'$, then so is $z$. 
 
 If $\Omega' = \Omega_s$, we say "detect strictness". If 
$\Omega' = \Omega_p$, we say "detect pseudoness".\end{definition} 

We give now the main result of this article. 
Note that, considering items 1 to 3 in Example \ref{ex:pielimits}, we get Propositions 4.2 to 4.4 in \cite{S.lifting}.
 
 \begin{theorem} \label{teo:main}
Let $(\A,\Sigma)$ be PIE, and let a $2$-functor $\A \mr{\overline{F}} T$-$Alg_{\omega}^{\Omega}$.
 We assume that $\overline{F(f_A)}$ is an invertible 2-cell for each $A \in \A$. 
  If $\sosopLim{A \in \A}{FA}$ exists in $\K$ and is $\Omega$-compatible, then $\sosopLim{A \in \A}{\overline{F}A}$ exists in $T$-$Alg_{\omega}^{\Omega}$ and is preserved by $U_{\omega}^{\Omega}$. 
 In other words, the forgetful $2$-functor $U_{\omega}^{\Omega}$ creates this type of $\sigma$-$s$-op-limits. 
  The family of projections $\{\pi_{A_0}\}_{A_0 \in \A_0}$ of this limit are strict, and they jointly detect $\Omega'$-ness for any family $\Omega'$ such that $\sosopLim{A \in \A}{FA}$ is also $\Omega'$-compatible.
 \end{theorem}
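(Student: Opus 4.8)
The plan is to follow the strategy of \cite[Th.\ 5.1]{S.lifting} and show that $U_{\omega}^{\Omega}$ \emph{creates} the limit. Write $L := \sosopLim{A \in \A}{FA}$, with projections $\{L \mr{\pi_A} FA\}$ and structural $2$-cells $\{\pi_B \Mr{\pi_f} Ff\,\pi_A\}$ (so $\pi_f = id$ for $f \in \Sigma$), and write $\overline{F}A = (FA, a_A)$. First I would equip $L$ with a $T$-algebra structure $TL \mr{\ell} L$. The naive cone with vertex $TL$ given by $\theta_A = a_A\, T\pi_A$ and $\theta_f = (\overline{F(f)}\, T\pi_A) \circ (a_B\, T\pi_f)$ is op-lax but fails axiom \textbf{$\sigma$sC}. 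The decisive step is to correct it using Lemma \ref{lema:nuevocono}: the $2$-cells $\alpha_A := \overline{F(f_A)}\, T\pi_{A_0} : a_A\, T\pi_A \Rightarrow F(f_A)\,a_{A_0}\, T\pi_{A_0}$ are invertible \emph{precisely because of the hypothesis} that each $\overline{F(f_A)}$ is invertible, and $\alpha_{A_0} = id$. Modifying $\theta$ by the $\alpha_A$ produces an op-lax cone $\theta'$ with $\theta'_A = F(f_A)\, a_{A_0}\, T\pi_{A_0}$, and I then define $\ell$ as the factorization of $\theta'$ through the isomorphism \eqref{isoplim}.

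The crux is to verify that $\theta'$ is a genuine $\sigma$-$s$-op-cone, i.e.\ that $\theta'_f = id$ for $f \in \Sigma$. For such an $f \colon A \to B$ the PIE hypothesis gives $A_0 = B_0$ and $f f_A = f_B$; inserting the formula of Lemma \ref{lema:nuevocono} and whiskering out $T\pi_{A_0}$, the cell $\theta'_f$ becomes $\big[(Ff\,\overline{F(f_A)}) \circ (\overline{F(f)}\, TF(f_A)) \circ \overline{F(f_B)}^{-1}\big]\, T\pi_{A_0}$. Now $2$-functoriality of $\overline{F}$ gives $\overline{F}(f)\,\overline{F}(f_A) = \overline{F}(f_B)$, and the composition rule for the structural $2$-cells of $\omega$-morphisms states exactly that $(Ff\,\overline{F(f_A)}) \circ (\overline{F(f)}\, TF(f_A)) = \overline{F(f_B)}$; invertibility of $\overline{F(f_B)}$ then cancels the whole bracket, so $\theta'_f = id$. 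Granting this, the unit and associativity axioms for $(L,\ell)$ reduce, after composing with the jointly monic family $\{\pi_{A_0}\}$ (Corollary \ref{coro:projjm}), to those of $a_{A_0}$ together with naturality of the unit and multiplication of $T$, using that $\pi_{A_0}\,\ell = a_{A_0}\, T\pi_{A_0}$ holds on the nose.

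Each projection becomes an $\omega$-morphism with structural cell $\overline{\pi_A} = \alpha_A$; since $\alpha_{A_0} = id$ the distinguished projections $\pi_{A_0}$ are \emph{strict}, while for general $A$ one has $\overline{\pi_A} = \overline{F(f_A)}\, T\pi_{A_0} \in \Omega$ ($\Omega$ being closed under whiskering), and the $\pi_f$ are algebra $2$-cells, so the $\pi_A$ assemble into a $\sigma$-$s$-op-cone in $T$-$Alg_{\omega}^{\Omega}$. To lift the one-dimensional universal property, given a $\sigma$-$s$-op-cone $\overline{\Theta}$ with vertex $(Z,z)$, let $w$ factor its underlying $\K$-cone; I would produce the structural cell $\overline{w}\colon \ell\, Tw \Rightarrow w\,z$ by applying the two-dimensional universal property to the family $\{\overline{\Theta_{A_0}}\}$, which is legitimate since $\pi_{A_0}\,\ell\, Tw = a_{A_0}\, T\Theta_{A_0}$ and $\pi_{A_0}\, w\, z = \Theta_{A_0}\, z$. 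Here $\Omega$-compatibility is essential: because every $\overline{\Theta_{A_0}} \in \Omega$ it guarantees $\overline{w} \in \Omega$, so that $w$ is an $\omega$-morphism, and joint monicity supplies its coherence axioms and its uniqueness; the two-dimensional property lifts by the same reasoning. Finally, for detection, if $g \colon (Z,z) \to (L,\ell)$ is an $\omega$-morphism such that each $\pi_{A_0}\,g$ is an $\Omega'$-morphism, then strictness of $\pi_{A_0}$ makes the structural cell of $\pi_{A_0}\,g$ equal to $\pi_{A_0}\,\overline{g}$; these are the $\A_0$-components of the morphism of cones corresponding to $\overline{g}$ under the two-dimensional universal property, so $\Omega'$-compatibility (equivalently $\A_0$-$\Omega'$-compatibility, by Remark \ref{rem:A0compatible}) forces $\overline{g} \in \Omega'$. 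I expect the main obstacle to be the \textbf{$\sigma$sC} verification above, where the PIE structure, the $2$-functoriality of $\overline{F}$, and the invertibility hypothesis all have to be combined correctly; the remaining steps are careful bookkeeping governed by joint monicity and the two compatibility hypotheses.
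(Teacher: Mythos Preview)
Your proposal follows essentially the same route as the paper: build the naive op-lax cone $\theta$ with vertex $TL$, correct it via Lemma \ref{lema:nuevocono} using the invertible $\alpha_A = \overline{F(f_A)}\,T\pi_{A_0}$, verify \textbf{$\sigma$sC} for the corrected cone using the PIE hypothesis and the composition law for structural cells, and then run the creation argument through joint monicity of $\{\pi_{A_0}\}$ and $\Omega$-compatibility. The one place where you are looser than the paper is the one-dimensional universal property: to invoke the two-dimensional universal property of $L$ you must feed it a \emph{modification}, not merely the family $\{\overline{\Theta_{A_0}}\}$; the paper supplies the full family $\{(\overline{\pi_A}^{-1}\,Tw)\circ\overline{\Theta_A}\}_{A\in\A}$ and checks {\bf LCM} for every $f$ (using that the $\Theta_f$ are algebra $2$-cells together with Lemma \ref{lema:nuevocono}), and only then applies $\Omega$-compatibility via Remark \ref{rem:A0compatible}. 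This is routine bookkeeping rather than a new idea, but it is the step your sketch elides.
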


\begin{proof}
 Denote $L = \sosopLim{A \in \A}{FA}$. We construct first a (op-)lax cone $\theta = (\theta_A, \theta_f)$ with vertex $TL$, where $\theta_A = a T(\pi_A)$ and  $\theta_f = (\overline{Ff} T\pi_A) (a T\pi_f)$ (see the middle part of the diagram \eqref{eq:diagconop} below). 
 Axioms {\bf LC0-2} are all easy checks (for the last one use that $F\gamma$ is an algebra 2-cell).
 We consider also for each $A \in \A$ the arrow $\mu_A$ defined as the composition $TL \mr{T\pi_{A_0}} TFA_0 \mr{a_0} FA_0 \mr{Ff_A} FA$ and the 2-cell $\alpha_A = T\pi_{A_0} \overline{Ff_A} : \theta_A \Rightarrow \mu_A$. By Lemma \ref{lema:nuevocono} we have a lax cone $\mu$ for which each structural 2-cell $\mu_f$ is given by the composition
 
  \begin{equation} \label{eq:diagconop}
\vcenter{ \xymatrix{&& TL \ar@/_2ex/[dll]_{T\pi_{A_0}} \ar@/^2ex/[drr]^{T\pi_{B_0}} 
 \ar[rd]^{T \pi_B} \ar[dl]_{T \pi_A} \ar@{}[d]|{\substack{T\pi_f \\ \Leftarrow}} \\
 TFA_0 \ar@{}[dr]|{\substack{\overline{Ff_A} \\ \Leftarrow}} \ar[r]^{TFf_A} \ar[d]_{a_0} &
 TFA \ar[rr]^{TFf} \ar[d]_a \ar@{}[drr]|{\substack{\overline{Ff} \\ \Leftarrow}} && TFB \ar[d]^b \ar@{}[dr]|{\substack{\overline{Ff_B}^{-1} \\ \Leftarrow}} & TFB_0 \ar[l]_{TFf_B} \ar[d]^{b_0} \\
				  FA_0 \ar[r]_{Ff_A} & FA \ar[rr]_{Ff} && FB & FB_0 \ar[l]^{Ff_B}}}
\end{equation}
 
It is for checking that this lax cone is in fact a $\sigma$-s-cone (that is, axiom {\bf $\sigma$sC}) that we will use the full strength of the PIE hypothesis: if $f$ as above is in $\Sigma$, then by the unicity of the pair $(B_0,f_B)$ we have $A_0 = B_0$ and $f f_A = f_B$, so that \eqref{eq:diagconop} is an identity 2-cell. 

From the one-dimensional universal property of the limit $\sosopLim{A \in \A}{FA}$, we have a unique $TL \mr{l} L$ such that $\pi_A l = \mu_A$ and $\pi_f l = \mu_f$ for every $A,f$. The usual $T$-algebra axioms for $L$ (see for example \cite[(1.2)(1.3)]{K2dim}) follow from those of the $FA_0$ (for all $A_0 \in \A_0$) using Corollary \ref{coro:projjm} and the naturality of the unit and the multiplication of $T$. 
For $A \in \A$, we have the 2-cell $\overline{\pi_A} = \alpha_A$ and in this way $(\pi_A,\overline{\pi_A})$ is an algebra morphism and the equality $\pi_f l = \mu_f$ expresses that the $\pi_f$ are algebra $2$-cells, thus $\pi$ is a $\sigma$-s-cone in $T$-$Alg_{\omega}^{\Omega}$ which we will show is the $\sigma$-s-limit.
Note that for $A_0 \in \A_0$, by unicity we have $f_{A_0} = id$ and thus $\pi_{A_0}$ is a strict morphism.

To show the one-dimensional universal property of this limit, consider another $\sigma$-s-cone $E \xr{\alg{h_A}} FA$, $\alg{hb} \Mr{h_f} \alg{Ff} \alg{ha}$. We need to show that there is a unique $E \xr{\alg{h}} L$ such that $\alg{h_A} = (\pi_A,\overline{\pi_A}) \alg{h}$ for each $A$, that is $\pi_A h = h_A$, $\pi_f h = h_f$ and $(\pi_A \overline{h})(\overline{\pi_A} Th) = \overline{h_A}$. By the universal property in $\K$, there exists a unique $h$ satisfying the first two of these equalities. 
Noting that $h_* e = \pi_* h e$, $\mu_* Th = \pi_* l Th$, it remains thus to show that there is a unique 2-cell $he \Mr{\overline{h}} l Th$ in $\Omega$ such that $\pi_* \overline{h}$ equals the composition $\mu_* Th \mr{\overline{\pi_A}^{-1} Th} \theta_* Th \mr{\overline{h_A}} h_* e$, which will follow by the $\Omega$-compatibility hypothesis once we show that this composition is a modification. The fact that the $\overline{h_A}$ are so is equivalent to the fact that each $h_f$ is an algebra $2$-cell (see \cite[Th. 5.1]{S.lifting} for details), and so we conclude by Lemma \ref{lema:nuevocono} (recall that $\overline{\pi_A} =  \alpha_A$). 
Recall (Remark \ref{rem:A0compatible}) that a PIE limit is $\Omega'$-compatible if and only if it is $\A_0$-$\Omega'$-compatible, so if this is the case for $L$ then $\overline{h}$ is in $\Omega'$ when each $\overline{h_{A_0}}$ is, giving the last assertion of the theorem (recall that each $\pi_{A_0}$ is a strict morphism, so $\overline{\pi_{A_0}} = id$). 
The coherence conditions for $\overline{h}$ follow from those of the $\overline{h_{A_0}}$ using the last statement in Corollary \ref{coro:projjm}.

Now, for the $2$-dimensional universal property in $T$-$Alg_{\omega}^{\Omega'}$, we consider two cones $h_*,g_*$ with vertex $E$ and a modification with components $h_A \Mr{\beta_A} g_A$. By the universal property of the limit in $\cc{K}$, we have the desired 2-cell $h \Mr{\alpha} g$, and it suffices to check that it is an algebra 2-cell. Again, this follows using the last statement in Corollary \ref{coro:projjm} since each $\beta_{A_0}$ is by hypothesis an algebra 2-cell.
\end{proof}

It is now known (\cite[\S 6.4]{LSAdv}) that PIE limits are the only ones that can be lifted to all the $T$-$Alg_p$ 2-categories, so  this Theorem is in a sense as general as such a lifting result can be (see Corollary \ref{coro:paraalgp}). When compared to the result in op. cit., we observe that the proof of Theorem \ref{teo:main} is much more direct, and that it allows to recover the extra strictness property of the distinguished projections. Also, in the lax case it is significantly stronger and it has many previous results as particular cases (see below).

\smallskip

We consider first the 2-category $T$-$Alg_p$ of pseudo morphisms of algebras (originally called morphisms of algebras in \cite{K2dim}). 
Putting $\Omega = \Omega_p$, $\Omega' = \Omega_s$, all the hypothesis of Theorem \ref{teo:main} are immediately satisfied and we have:

\begin{corollary} \label{coro:paraalgp}
The forgetful $2$-functor $T$-$Alg_p \mr{} \cc{K}$ creates PIE limits. 
For any PIE indexing pair $(\A,\Sigma)$ of this limit, the family of projections $\{\pi_{A_0}\}_{A_0 \in \A_0}$ are strict, and they jointly detect strictness. \qed
\end{corollary}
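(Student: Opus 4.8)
The plan is to obtain the Corollary as a direct specialization of Theorem \ref{teo:main}, taking $\Omega = \Omega_p$ (the invertible $2$-cells) and $\Omega' = \Omega_s$ (the identities). By the identifications recorded at the end of Section \ref{sec:PIElimits} (Propositions \ref{prop:xx} and \ref{prop:zz}, together with Remark \ref{rem:weightedcomoopconical}), any PIE limit admits a presentation as a $\sigma$-$s$-op-limit $\sosopLim{A \in \A}{FA}$ with a PIE indexing pair $(\A,\Sigma)$, so that creating PIE limits amounts to creating such $\sigma$-$s$-op-limits; it thus suffices to verify that every hypothesis of Theorem \ref{teo:main} is automatically satisfied in the case $\Omega = \Omega_p$.

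The key observation is that for this choice of $\Omega$ the hypotheses are free. Since $\Omega_p$ consists of all invertible $2$-cells, every structural $2$-cell of a morphism in $T$-$Alg_p = T$-$Alg_{\omega}^{\Omega_p}$ is invertible; in particular each $\overline{F(f_A)}$ is invertible, so the invertibility hypothesis of Theorem \ref{teo:main} holds for any diagram $\overline{F}$. Moreover, by \cite[Rem. 3.13]{S.lifting} (recalled in the excerpt) every limit is $\Omega$-compatible when $\Omega$ is the family of invertible $2$-cells, so the $\Omega$-compatibility hypothesis is automatic as soon as $\sosopLim{A \in \A}{FA}$ exists in $\K$. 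Theorem \ref{teo:main} then yields that this limit is created by $U_{\omega}^{\Omega_p}$, that is, $T$-$Alg_p \mr{} \K$ creates PIE limits.

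For the final assertion I would take $\Omega' = \Omega_s$. By the same Remark, every limit is $\Omega_s$-compatible, so the hypothesis of $\Omega'$-compatibility needed for the detection statement in Theorem \ref{teo:main} also holds automatically. The Theorem then gives that the projections $\{\pi_{A_0}\}_{A_0 \in \A_0}$ are strict and jointly detect $\Omega_s$-ness, which by Definition \ref{de:preserve} is precisely detecting strictness. There is no genuine obstacle here: the whole content of the Corollary lies in recognizing that the compatibility conditions, which in the general Theorem are genuine restrictions on the limit in $\K$, become vacuous for the extremal families $\Omega_p$ and $\Omega_s$; the only mild subtlety is matching the terminology ``creates PIE limits'' with ``creates $\sigma$-$s$-op-limits with a PIE indexing pair'' through the equivalences established in Section \ref{sec:PIElimits}.
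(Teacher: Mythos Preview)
Your proposal is correct and matches the paper's own argument: the paper simply states that, putting $\Omega = \Omega_p$ and $\Omega' = \Omega_s$, all the hypotheses of Theorem~\ref{teo:main} are immediately satisfied, and you have spelled out exactly why each one is vacuous in this case. The identification of PIE limits with $\sigma$-$s$-op-limits over PIE indexing pairs via Section~\ref{sec:PIElimits} is also the intended bridge.
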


When applied to the items 1 to 5 in Example \ref{ex:pielimits}, we obtain the Propositions 2.1 to 2.5 in \cite{K2dim}. This procedure yields not only a unified proof of these propositions, but also a slight strengthening: since for applying this Corollary we don't depend on the construction on the limit we want to lift in terms of products, inserts and equifiers, the category $\K$ is not required to have these limits, but only the one that is lifted (see also  \cite[Remark 2.8]{K2dim}).

The result in \cite[Theorem 2.6]{K2dim}, regarding the lifting of lax and pseudo limits can also be obtained from this Corollary, recalling that these are PIE limits (see \cite[p.45]{PIE}). 
In the expression in op. cit. of a lax or pseudo limit weighted by $W$ as a PIE weighted limit, we note that the objects which will define our $\A_0$ are given by the pairs $(x,A)$ with $x \in WA$, so the projections that are strict and detect strictness are exactly the same as in \cite[Theorem 2.6]{K2dim}.
We note that the lifting of the more general $\sigma$-limits is obtained in \cite[Th. 5.1]{S.lifting} using as in the present paper their conical expression but with a proof simpler to the one of Theorem \ref{teo:main} (for $\sigma$-limits, the cone $\theta$ in the proof of \ref{teo:main} suffices and so $\mu$ isn't needed). A follow-up paper to \cite{DDS1} with some further results for $\sigma$-limits is under preparation in which we plan in particular to construct the weights that give $\sigma$-limits as strict limits. Since by \cite[Th. 5.1]{S.lifting} and the result in \cite[\S 6.4]{LSAdv} mentioned above it follows that these weights have to be PIE, that construction would also allow to apply the Corollary above in this case.

\smallskip

We consider now the 2-category $T$-$Alg_\ell$ of lax morphisms of algebras. 
Putting $\Omega = \Omega_\ell$ and $\Omega' = \Omega_s$ (or $\Omega_p$), we have

\begin{corollary} \label{coro:paralaxalg}
Let $(\A,\Sigma)$ be PIE, and let a $2$-functor $\A \mr{\overline{F}} T$-$Alg_{\ell}$.
 We assume that $\overline{F(f_A)}$ is an invertible 2-cell for each $A \in \A$. 
Then the forgetful $2$-functor $U_\ell$ creates $\ssopLim{A \in \A}{\overline{F}A}$. 
The family of projections $\{\pi_{A_0}\}_{A_0 \in \A_0}$ are strict, and they jointly detect strictness (and pseudoness). \qed
\end{corollary}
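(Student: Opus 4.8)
The plan is to apply Theorem \ref{teo:main} to the given data, making the specific choices $\Omega = \Omega_\ell$ (the family of all $2$-cells of $\K$) and, for the two detection statements, $\Omega' = \Omega_s$ (the identities) and $\Omega' = \Omega_p$ (the invertible $2$-cells). By the definitions recalled at the beginning of this section, $T$-$Alg_\ell$ is exactly $T$-$Alg_\omega^{\Omega_\ell}$ and $U_\ell = U_\omega^{\Omega_\ell}$, so the desired creation statement is precisely this instance of the main theorem.

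First I would observe that the only hypothesis of Theorem \ref{teo:main} not already part of the present statement is the $\Omega$-compatibility of the base limit. But for $\Omega = \Omega_\ell$ this is automatic: as noted right after the definition of $\A_0$-$\Omega$-compatibility, when $\Omega$ consists of all the $2$-cells of $\K$ every limit is $\Omega$-compatible (\cite[Rem. 3.13]{S.lifting}). Since $(\A,\Sigma)$ is PIE and each $\overline{F(f_A)}$ is invertible, Theorem \ref{teo:main} therefore applies verbatim and yields its conclusion: whenever $\ssopLim{A\in\A}{FA}$ exists in $\K$, the limit $\ssopLim{A\in\A}{\overline{F}A}$ exists in $T$-$Alg_\ell$ and is preserved by $U_\ell$ --- that is, $U_\ell$ creates this $\sigma$-$s$-op-limit --- and moreover the distinguished projections $\{\pi_{A_0}\}_{A_0\in\A_0}$ are strict.

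For the detection statements I would invoke the last clause of Theorem \ref{teo:main}, which asserts that the $\{\pi_{A_0}\}_{A_0\in\A_0}$ jointly detect $\Omega'$-ness for any family $\Omega'$ for which the base limit is $\Omega'$-compatible. Taking $\Omega' = \Omega_s$ yields detection of strictness and taking $\Omega' = \Omega_p$ yields detection of pseudoness; in both cases the base limit is $\Omega'$-compatible for free, again by \cite[Rem. 3.13]{S.lifting}, since $\Omega_s$ and $\Omega_p$ consist respectively of the identities and the invertible $2$-cells.

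The hard part, in fact, is that there is none: the corollary is an immediate specialization of Theorem \ref{teo:main}, and the only point to keep in mind is that the three families $\Omega_\ell$, $\Omega_p$, $\Omega_s$ relevant to the lax case are exactly the ones for which compatibility holds automatically, so that no compatibility condition needs to be verified by hand.
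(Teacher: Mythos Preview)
Your proposal is correct and matches the paper's approach exactly: the paper derives this corollary simply by putting $\Omega = \Omega_\ell$ and $\Omega' = \Omega_s$ (or $\Omega_p$) in Theorem~\ref{teo:main}, relying on the same observation that these three families make the compatibility hypotheses automatic. There is nothing to add.
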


Unlike the lax case of \cite[Prop. 6.9]{LSAdv}, here we have several results as particular cases. 
Considering items 2, 3 and 6 in Example \ref{ex:pielimits}, we obtain the Propositions 4.3, 4.4 and 4.6 (and therefore Theorem 3.2) of \cite{Llax} in the strong sense of Section 6 therein. 
Also, from Example (5) in \cite[p. 40]{PIE} it follows Proposition 4.5 in \cite{Llax}.
We make the remark that these results don't follow in general from the expression of PIE limits as $\sigma$-s-oplimits that comes from Proposition \ref{prop:weightedcomoconical}, since in this case we would have stronger hypothesis (this is similar to the case analyzed in \cite[Ex. 5.3]{S.lifting}). 

The family of arrows $\{ f_A \}_{A \in \A}$ gives precisely the arrows of the diagram which are required to be pseudo morphisms for the limit to be lifted in each of the cases above.  
In particular, this provides an explanation to {\em why} such a hypothesis is required in each of these cases, one that 
doesn't depend on the construction of the limit to be lifted in terms of other limits but rather on its {\em presentation} as a $\sigma$-s-limit.
Also, as in the case $\Omega = \Omega_p$ above, for this application of Theorem \ref{teo:main} we don't require $\cc{K}$ to have any other limit than the one we lift.

On the other hand, it seems that the op-lax limits considered in \cite[Th. 4.8]{Llax}, \cite[Cor. 5.9]{S.lifting} can't be lifted to $T$-$Alg_\ell$ using this Corollary, unless the diagram is in $T$-$Alg_p$. Of course, when this is the case by Corollary \ref{coro:paralaxalg} we have more generally that all PIE limits lift (cf. \cite[Prop. 4.1]{Llax}):

\begin{corollary}
The inclusion $T$-$Alg_p \mr{} T$-$Alg_\ell$ preserves all PIE limits.
The family of projections $\{\pi_{A_0}\}_{A_0 \in \A_0}$ 
of such limits in $T$-$Alg_\ell$ are strict and jointly detect strictness. \qed
\end{corollary}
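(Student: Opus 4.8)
The plan is to deduce this directly from Corollaries \ref{coro:paraalgp} and \ref{coro:paralaxalg}, the key observation being that both exhibit the lifted limit by \emph{creation} from one and the same base limit $L = \sosopLim{A \in \A}{FA}$ in $\K$. Fix a PIE indexing pair $(\A,\Sigma)$ and a diagram $\A \mr{\overline{F}} T$-$Alg_p$ whose $\sigma$-s-op-limit we wish to lift, and write $J$ for the inclusion $T$-$Alg_p \mr{} T$-$Alg_\ell$. First I would observe that, since every morphism of $T$-$Alg_p$ is a pseudo morphism, each structural $2$-cell $\overline{F(f)}$ is invertible; in particular each $\overline{F(f_A)}$ is. Hence the hypotheses of Corollary \ref{coro:paraalgp} (taking $\Omega=\Omega_p$) hold for $\overline{F}$, and those of Corollary \ref{coro:paralaxalg} (taking $\Omega=\Omega_\ell$) hold for $J\overline{F}$, so that the forgetful $2$-functors $U_p$ and $U_\ell$ respectively create $\ssopLim{A\in\A}{\overline{F}A}$ and $\ssopLim{A \in \A}{J\overline{F}A}$, both lying over the same $L$ (recall $U_\ell J = U_p$, so the two base diagrams agree).

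The heart of the argument is the comparison of these two creations. Let $(\pi_A,\overline{\pi_A})$ be the limit cone in $T$-$Alg_p$ produced by Corollary \ref{coro:paraalgp}; by the construction in the proof of Theorem \ref{teo:main} its underlying object is $L$, its projections are the base projections $\pi_A$, and $\overline{\pi_A} = \alpha_A = T\pi_{A_0}\,\overline{Ff_A}$, which is invertible, being a whiskering of an invertible $2$-cell. Applying $J$ yields a $\sigma$-s-op-cone $J(\pi)$ in $T$-$Alg_\ell$ whose image under $U_\ell$ is exactly the universal cone $\{L \mr{\pi_A} FA\}$ in $\K$. Thus $J(\pi)$ is a lift of the base limit cone to $T$-$Alg_\ell$. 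But creation of limits by $U_\ell$ asserts precisely that such a lift is unique and is itself the limit; therefore $J(\pi)$ is the $\sigma$-s-op-limit of $J\overline{F}$, which is to say $J$ preserves the limit.

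The assertion about the projections then comes for free: $\{\pi_{A_0}\}_{A_0 \in \A_0}$ are strict and jointly detect strictness already by Corollary \ref{coro:paralaxalg}, and these are properties of the $T$-$Alg_\ell$-limit that we have just identified with $J(\pi)$. The only step that needs genuine care — and the closest thing to an obstacle — is checking that the two creations agree \emph{on the nose}, i.e. that $U_p$ and $U_\ell$ lift $L$ to the same object, the same $T$-algebra structure, and the same cone. I expect this to be routine rather than deep, because the algebra structure on $L$ and the projections constructed in the proof of Theorem \ref{teo:main} are built only from $L$ and the data of $\overline{F}$ and do not depend on the ambient family $\Omega$; consequently $J(\pi)$ really is the canonical lift detected by $U_\ell$, and the uniqueness clause of creation finishes the proof.
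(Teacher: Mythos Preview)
Your proposal is correct and follows essentially the same approach as the paper, which gives no proof beyond the preceding sentence and the \qed: once the diagram lands in $T\text{-}Alg_p$, every $\overline{F(f_A)}$ is invertible, so Corollary~\ref{coro:paralaxalg} applies, and since the construction in Theorem~\ref{teo:main} is independent of $\Omega$ the lifts along $U_p$ and $U_\ell$ coincide on the nose. Your argument is just a careful unpacking of this, and your final paragraph correctly identifies the only nontrivial point (agreement of the two created structures), which indeed is immediate from the proof of Theorem~\ref{teo:main}.
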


\bibliographystyle{unsrt}

\end{document}